\newtheorem{theorem}{Theorem}
\newtheorem{lemma}[theorem]{Lemma}
\newtheorem{proposition}[theorem]{Proposition}
\newtheorem{corollary}[theorem]{Corollary}
\newtheorem{remark}[theorem]{Remark}
\newtheorem{example}[theorem]{Example}
\begin{document}
\title[Localization of injective modules]{Localization of injective modules over arithmetical rings}
\author{Fran\c cois Couchot}
\address{Laboratoire de Math\'ematiques Nicolas Oresme, CNRS UMR
  6139,
D\'epartement de math\'ematiques et m\'ecanique,
14032 Caen cedex, France}
\email{couchot@math.unicaen.fr} 

\keywords{arithmetical ring, semicoherent ring, injective module, FP-injective module, finitely injective module, Goldie dimension, valuation ring, Pr\"ufer domain, finite character.}

\subjclass[2000]{Primary 13F05, 13C11}

\begin{abstract}  It is proved that localizations of 
injective $R$-modules  of finite Goldie dimension are  injective if $R$ is an arithmetical ring satisfying the following condition: for every maximal ideal $P$, $R_P$ is either coherent or not semicoherent. If, in addition, each finitely generated $R$-module has finite Goldie dimension, then localizations of finitely injective $R$-modules are finitely injective too. Moreover, if $R$ is a Pr\"ufer domain of finite character, localizations of 
injective $R$-modules  are  injective. 
\end{abstract}
\maketitle

This is a sequel and a complement of \cite{Cou06}. If $R$ is a noetherian or hereditary ring, it is well known that localizations of injective $R$-modules are injective. By \cite[Corollary 8]{Cou06} this property holds if $R$ is a h-local Pr\"ufer domain. However \cite[Example 1]{Cou06}
shows that this result is not generally true. E. C. Dade was probably the first  to study localizations of injective modules. By \cite[Theorem 25]{Dad81}, there exist a ring $R$, a multiplicative subset $S$ and an injective module $G$ such that $S^{-1}G$ is not injective. In this example we can choose $R$ to be a coherent domain.

The aim of this paper is to study localizations of injective modules over arithmetical rings. We deduce from \cite[Theorem 3]{Cou06} the two following results: any localization of an injective $R$-module of finite Goldie dimension  is injective if and only if any localization at a maximal ideal of $R$ is either coherent or non-semicoherent (Theorem~\ref{T:gold}) and each localization of any injective module over a Pr\"ufer domain of finite character is injective (Theorem~\ref{T:finchar}). Moreover, if any localization at a maximal ideal of $R$ is either coherent or non-semicoherent, and if each finitely generated $R$-module has a finite Goldie dimension, then each localization of any finitely injective $R$-module is finitely injective.

In this paper all rings are associative and commutative with unity and
all modules are unital. 
A module is said to be
\textit{uniserial} if its  submodules are linearly ordered by inclusion.
A ring $R$ is a \textit{valuation ring} if it is uniserial as
$R$-module and $R$ is \textit{arithmetical} if $R_P$ is a
valuation ring for every maximal ideal $P.$ An arithmetical domain $R$ is said to be \textit{Pr\"ufer}. We say that a module $M$ is of {\it Goldie dimension} $n$ if and only if its injective hull $\mathrm{E}(M)$ is a direct
sum of $n$ indecomposable injective modules. 
 We say that a domain $R$ is \textit{of finite
  character} if every non-zero element is contained in  finitely
many maximal ideals.

As in \cite{RaRa73}, a module $M$ over a ring $R$ is said to be {\it finitely injective} if every homomorphism $f:A\rightarrow M$ extends to $B$ whenever $A$ is a finitely generated submodule of an arbitrary $R$-module $B$.

As in \cite{Mat85} a ring $R$ is said to be \textit{semicoherent} if $\mathrm{Hom}_R(E,F)$ is a submodule of a flat $R$-module for any pair of injective $R$-modules $E,\ F$. An $R$-module $E$ is {\it FP-injective} if 
$\hbox{Ext}_R^1 (F, E) = 0$  for any finitely presented $R$-module $F,$ and $R$ is {\it self FP-injective} if $R$ is 
FP-injective as $R$-module.  We recall that a module $E$ is FP-injective if and only if it is a pure submodule
of every overmodule. If each injective $R$-module is flat we say that $R$ is an \textit{IF-ring}. By \cite[Theorem 2]{Col75}, $R$ is an IF-ring if and only if it is coherent and self FP-injective.

\medskip

We begin by some results on semicoherent rings.

\begin{proposition}
\label{P:FPinj} Let $R$ be a self FP-injective ring. Then $R$ is coherent if and only if it is semicoherent.
\end{proposition}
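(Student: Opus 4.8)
The plan is to use Colby's characterization (\cite[Theorem 2]{Col75}) to translate the statement into the language of the IF property and of flatness of character modules. Throughout write $M^{+}=\mathrm{Hom}_{\mathbb{Z}}(M,\mathbb{Q}/\mathbb{Z})$ for the character module, and recall the tensor--hom adjunction $\mathrm{Hom}_R(A,B^{+})\cong (A\otimes_R B)^{+}$, together with the standard facts that $A$ is flat if and only if $A^{+}$ is injective and that a pure submodule of a flat module is flat. Set $E=\mathrm{E}(R)$.

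For the direct implication, assume $R$ is coherent; since $R$ is self FP-injective it is an IF-ring by \cite[Theorem 2]{Col75}, so every injective module is flat. Given injective modules $E,F$, I would use that $F$, being injective, is a direct summand of $F^{++}=(F^{+})^{+}$, whence $\mathrm{Hom}_R(E,F)$ is a direct summand of $\mathrm{Hom}_R(E,(F^{+})^{+})\cong (E\otimes_R F^{+})^{+}$. Because $F$ is flat, $F^{+}$ is injective, hence flat (IF-ring); thus $E\otimes_R F^{+}$ is a tensor product of flat modules and so is flat. Over the coherent self FP-injective ring $R$ every flat module is FP-injective (finite free modules are FP-injective since $R$ is, and over a coherent ring FP-injectivity is preserved under the direct limits expressing flat modules), and the character module of an FP-injective module over a coherent ring is flat. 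Therefore $(E\otimes_R F^{+})^{+}$ is flat, so $\mathrm{Hom}_R(E,F)$ is a direct summand, hence a submodule, of a flat module, and $R$ is semicoherent.

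The converse is the substantive part. Assume $R$ is semicoherent; by Chase's theorem it suffices to prove that $R^{I}$ is flat for every index set $I$. First, $R^{I}$ is FP-injective: since $R$ is FP-injective and $\mathrm{Ext}^{1}_R(F,-)$ commutes with direct products, $\mathrm{Ext}^{1}_R(F,R^{I})\cong (\mathrm{Ext}^{1}_R(F,R))^{I}=0$ for every finitely presented $F$. Next, the faithful action of $R$ on $E$ gives an injection $R\hookrightarrow \mathrm{End}_R(E)$, hence an injection $R^{I}\hookrightarrow \mathrm{End}_R(E)^{I}\cong \mathrm{Hom}_R(E,E^{I})$. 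As $E^{I}$ is a product of injectives it is injective, so semicoherence provides a flat module $G$ with $\mathrm{Hom}_R(E,E^{I})\subseteq G$, and therefore $R^{I}\subseteq G$. Finally, being FP-injective, $R^{I}$ is a pure submodule of every overmodule, in particular of the flat module $G$; a pure submodule of a flat module is flat, so $R^{I}$ is flat. Chase's theorem then yields coherence.

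The main obstacle is exactly this upgrade, in the converse, from the weak conclusion of semicoherence (``submodule of a flat module'') to genuine flatness of $R^{I}$. It is resolved by observing that self FP-injectivity makes each power $R^{I}$ FP-injective, hence pure in any overmodule, so that its embedding into the flat module $G$ forces flatness. The hypothesis of self FP-injectivity enters precisely here, and nowhere is it more essential.
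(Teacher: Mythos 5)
Your proof is correct, but it takes a genuinely different route from the paper's, most visibly in the converse, which is the substantive direction. The paper fixes an arbitrary injective module $F$ and uses purity of $R$ in $E=\mathrm{E}(R)$ (i.e.\ self FP-injectivity) to show that the restriction map $\mathrm{Hom}_R(E,F)\rightarrow F$ splits; since semicoherence places $\mathrm{Hom}_R(E,F)$ inside a flat module, the injective module $F$ embeds in, hence is a direct summand of, a flat module, so every injective is flat. Thus $R$ is an IF-ring and Colby's theorem \cite[Theorem 2]{Col75} yields coherence. You instead test coherence via Chase's theorem on the powers $R^I$: self FP-injectivity makes $R^I$ FP-injective (Ext commutes with products), semicoherence applied to the single pair $(E,E^I)$ embeds $R^I$ into a flat module through $R^I\hookrightarrow \mathrm{End}_R(E)^I\cong\mathrm{Hom}_R(E,E^I)$, and purity of the FP-injective module $R^I$ upgrades the embedding to flatness. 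Both arguments exploit the same mechanism---a suitably pure (or injective) submodule of a flat module is flat---but the paper's version delivers the stronger IF property along the way, at the cost of invoking Colby's characterization, while yours trades Colby for Chase and needs semicoherence for only one pair of injectives. In the forward direction the paper simply cites \cite[Theorem~XIII.6.4(b)]{FuSa01}, for which coherence alone suffices; your character-module argument is correct but can be shortened: once you know $E\otimes_R F^{+}$ is flat, $(E\otimes_R F^{+})^{+}$ is injective by the standard duality between flatness and injectivity of character modules, hence flat because $R$ is an IF-ring, with no need for the detour through FP-injectivity of flat modules and the flatness of character modules of FP-injectives over coherent rings.
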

\begin{proof} If $R$ is coherent then, by \cite[Theorem~XIII.6.4(b)]{FuSa01}, $\mathrm{Hom}_R(E,F)$ is flat for any pair of injective modules $E,\ F$; so, $R$ is semicoherent. Conversely, let $E$ be the injective hull of $R$. Since $R$ is a pure submodule of $E$, then, for each injective $R$-module $F$, the following sequence is exact:
\[0\rightarrow\mathrm{Hom}_R(F\otimes_RE/R,F)\rightarrow\mathrm{Hom}_R(F\otimes_RE,F)\rightarrow\mathrm{Hom}_R(F\otimes_RR,F)\rightarrow 0.\]
By using the natural isomorphisms $\mathrm{Hom}_R(F\otimes_RB,F)\cong\mathrm{Hom}_R(F,\mathrm{Hom}_R(B,F))$ and $F\cong\mathrm{Hom}_R(R,F)$ we get the following exact sequence:
\[0\rightarrow\mathrm{Hom}_R(F,\mathrm{Hom}_R(E/R,F))\rightarrow\mathrm{Hom}_R(F,\mathrm{Hom}_R(E,F))\rightarrow\mathrm{Hom}_R(F,F)\rightarrow 0.\]
So, the identity map on $F$ is the image of an element of $\mathrm{Hom}_R(F,\mathrm{Hom}_R(E,F))$.
Consequently the following sequence splits:
\[0\rightarrow\mathrm{Hom}_R(E/R,F)\rightarrow\mathrm{Hom}_R(E,F)\rightarrow F\rightarrow 0.\]
It follows that $F$ is a summand of a flat module. So, $R$ is an IF-ring. 
\end{proof}

\begin{corollary}
\label{C:semicoh} Let $R$ be a ring such that its ring of quotients $Q$ is self FP-injective. Then $R$ is semicoherent if and only if $Q$ is coherent.
\end{corollary}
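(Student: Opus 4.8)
The plan is to reduce the statement to a comparison of \emph{semicoherence} for $R$ and for $Q$, and then feed the result into Proposition~\ref{P:FPinj}. Since $Q$ is self FP-injective, Proposition~\ref{P:FPinj} already gives that $Q$ is coherent if and only if $Q$ is semicoherent; so it suffices to prove that $R$ is semicoherent if and only if $Q$ is semicoherent, after which the chain $R\text{ semicoherent}\iff Q\text{ semicoherent}\iff Q\text{ coherent}$ closes the argument. Throughout I would exploit that the canonical map $R\to Q$ is a flat ring epimorphism (localization at the regular elements, in particular injective). From this I get the four working facts I need: $Q\otimes_R-$ is exact; for $Q$-modules $M,N$ restriction identifies $\mathrm{Hom}_R(M,N)$ with $\mathrm{Hom}_Q(M,N)$; the multiplication map $Q\otimes_R M\to M$ is an isomorphism for every $Q$-module $M$; and every injective $Q$-module is injective over $R$, because $\mathrm{Hom}_R(-,N)\cong\mathrm{Hom}_Q(Q\otimes_R-,N)$ is exact when $N$ is $Q$-injective.

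For the direction ``$R$ semicoherent $\Rightarrow$ $Q$ semicoherent'', I would take injective $Q$-modules $E,F$, view them as injective $R$-modules, and use $R$-semicoherence to embed $\mathrm{Hom}_R(E,F)$ into a flat $R$-module $G$. Applying the exact functor $Q\otimes_R-$ and using $Q\otimes_R\mathrm{Hom}_R(E,F)\cong\mathrm{Hom}_R(E,F)=\mathrm{Hom}_Q(E,F)$ together with the $Q$-flatness of $Q\otimes_R G$, one obtains an embedding of $\mathrm{Hom}_Q(E,F)$ into a flat $Q$-module, so $Q$ is semicoherent. This direction I expect to be routine.

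The harder direction, which I expect to be the main obstacle, is ``$Q$ semicoherent $\Rightarrow$ $R$ semicoherent'', precisely because injective $R$-modules need not be $Q$-modules and $\mathrm{Hom}$ does not commute with localization without finiteness hypotheses. My plan is to attach to injective $R$-modules $E,F$ the $Q$-modules $\overline E=\mathrm{Hom}_R(Q,E)$ and $\overline F=\mathrm{Hom}_R(Q,F)$, which are injective over $Q$ since $\mathrm{Hom}_Q(-,\mathrm{Hom}_R(Q,E))\cong\mathrm{Hom}_R(-,E)$ is exact. Functoriality of $\mathrm{Hom}_R(Q,-)$ supplies an $R$-linear map $\mathrm{Hom}_R(E,F)\to\mathrm{Hom}_Q(\overline E,\overline F)$, $\phi\mapsto\mathrm{Hom}_R(Q,\phi)$, and the key point will be to show it is injective. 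For this I would use that an injective module is $Q$-divisible, so the evaluation-at-$1$ map $\mathrm{Hom}_R(Q,E)\to E$ is surjective: apply $\mathrm{Hom}_R(-,E)$ to $0\to R\to Q\to Q/R\to 0$ and note $\mathrm{Ext}^1_R(Q/R,E)=0$. Then $\mathrm{Hom}_R(Q,\phi)=0$ forces $\phi(e)=\phi(g(1))=0$ for every $e=g(1)$, hence $\phi=0$.

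To finish, $Q$-semicoherence embeds $\mathrm{Hom}_Q(\overline E,\overline F)$ into a flat $Q$-module, which is flat over $R$ because $Q$ is $R$-flat; composing with the injection of the previous paragraph shows $\mathrm{Hom}_R(E,F)$ embeds into a flat $R$-module, so $R$ is semicoherent. The only delicate verifications are the behaviour of $\mathrm{Hom}$ and $\otimes_R$ under the flat epimorphism $R\to Q$ and the surjectivity of the evaluation map, both resting on the divisibility of injective modules; once these are in place, the equivalence $R\text{ semicoherent}\iff Q\text{ coherent}$ follows by combining the two directions with Proposition~\ref{P:FPinj}.
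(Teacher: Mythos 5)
Your proposal is correct, but the route differs from the paper's in the harder direction. For ``$R$ semicoherent $\Rightarrow Q$ coherent'' the paper simply cites Matlis for the transfer of semicoherence to $Q$ and then applies Proposition~\ref{P:FPinj}, which is what you prove by hand (routinely and correctly) with the flat-epimorphism formalism. For the converse, however, the paper does not pass through ``$Q$ semicoherent'' at all: it notes that $Q$ coherent and self FP-injective makes $Q$ an IF-ring (Colby's theorem, already quoted in the introduction), observes that multiplication by any regular element of $R$ is injective on $\mathrm{Hom}_R(E,F)$ (because injective modules are divisible by regular elements), and hence embeds $\mathrm{Hom}_R(E,F)$ into the $Q$-injective hull of $Q\otimes_R\mathrm{Hom}_R(E,F)$, which is flat over $Q$ and therefore over $R$. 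Your argument instead proves a purely formal descent statement: semicoherence passes from $Q$ down to $R$ along the injective flat epimorphism $R\to Q$, via the embedding $\mathrm{Hom}_R(E,F)\hookrightarrow\mathrm{Hom}_Q(\mathrm{Hom}_R(Q,E),\mathrm{Hom}_R(Q,F))$, whose injectivity you correctly reduce to the surjectivity of evaluation at $1$ (i.e.\ $\mathrm{Ext}^1_R(Q/R,E)=0$). Both proofs ultimately rest on divisibility of injectives by regular elements, but they package it differently: the paper's version is shorter and produces a stronger embedding (into an injective flat module), while yours avoids Colby's IF-ring theorem entirely and yields a reusable general principle --- semicoherence ascends and descends along any injective flat ring epimorphism --- with the self FP-injectivity of $Q$ quarantined in the single application of Proposition~\ref{P:FPinj}. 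All the delicate steps you flag (adjunction making $\mathrm{Hom}_R(Q,E)$ $Q$-injective, $\mathrm{Hom}_R=\mathrm{Hom}_Q$ for $Q$-modules, flat $Q$-modules being flat over $R$) check out.
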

\begin{proof} If $R$ is semicoherent, then so is $Q$ by \cite[Proposition 1.2]{Mat85}. From Proposition~\ref{P:FPinj} we deduce that $Q$ is coherent. Conversely, let $E$ and $F$ be injective $R$-modules. It is easy to check that the multiplication by a regular element of $R$ in $\mathrm{Hom}_R(E,F)$ is injective. So, $\mathrm{Hom}_R(E,F)$ is a submodule of the injective hull of $Q\otimes_R\mathrm{Hom}_R(E,F)$ which is flat over $Q$ and $R$ because $Q$ is an IF-ring. 
\end{proof}

\medskip

From Corollary~\ref{C:semicoh} and \cite[Theorem II.11]{Cou03}  we deduce the following:
\begin{corollary}
\label{C:ValSemiCoh} Let $R$ be a valuation ring. Denote by $Z$ its subset of zerodivisors which is a prime ideal. Assume that $Z\ne 0$. Then the following conditions are equivalent:
\begin{enumerate}
\item $R$ is semicoherent;
\item $R_Z$ is an IF-ring;
\item $Z$ is not a flat $R$-module.
\end{enumerate} 
\end{corollary}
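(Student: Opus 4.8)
The plan is to identify the total ring of quotients of $R$ and then feed it into Corollary~\ref{C:semicoh}. Since $R$ is a valuation ring whose zerodivisors form the prime ideal $Z$, the regular elements of $R$ are exactly those of $R\setminus Z$, so the total quotient ring $Q$ of $R$ is $R_Z$. The crucial structural input I would record first is that $Q=R_Z$ is self FP-injective; this is precisely the point where \cite[Theorem II.11]{Cou03} enters, the localization at the zerodivisor prime being what forces every regular element of $Q$ to be a unit. With this in hand, Corollary~\ref{C:semicoh} applies directly to $R$.

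For $(1)\Leftrightarrow(2)$ I would chain two equivalences. By Corollary~\ref{C:semicoh}, applied with $Q=R_Z$ self FP-injective, $R$ is semicoherent if and only if $Q$ is coherent. On the other hand, because $Q$ is self FP-injective, the IF characterization quoted in the introduction (\cite[Theorem 2]{Col75}: a ring is an IF-ring if and only if it is coherent and self FP-injective) collapses to the statement that $Q$ is coherent if and only if $Q$ is an IF-ring. Composing the two gives $(1)\Leftrightarrow(2)$.

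For $(2)\Leftrightarrow(3)$ I would route through coherence of $R_Z$ and the flatness test over a valuation ring. Since finitely generated ideals of the valuation ring $Q=R_Z$ are principal, $Q$ is coherent exactly when $\mathrm{Ann}_{Q}(a)$ is finitely generated for every $a$. Using $\mathrm{Tor}_1^R(R/aR,M)\cong \mathrm{Ann}_M(a)/\mathrm{Ann}_R(a)M$, the $R$-module $Z$ is flat if and only if $\mathrm{Ann}_R(a)=Z\,\mathrm{Ann}_R(a)$ for every nonzero $a\in Z$ (note that $\mathrm{Ann}_R(a)\subseteq Z$, so $\mathrm{Ann}_Z(a)=\mathrm{Ann}_R(a)$). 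Over the local ring $R_Z$, a Nakayama argument shows that $\mathrm{Ann}_R(a)R_Z=ZR_Z\cdot\mathrm{Ann}_R(a)R_Z$ precisely when $\mathrm{Ann}_{R_Z}(a)$ fails to be finitely generated; thus ``$Z$ flat'' is the exact negation of ``$R_Z$ coherent'', which is $(2)\Leftrightarrow(3)$. The comparison of these universally-quantified annihilator conditions, together with the self FP-injectivity of $R_Z$, is what \cite[Theorem II.11]{Cou03} supplies.

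I expect the main obstacle to be exactly this $(2)\Leftrightarrow(3)$ step: showing that flatness of $Z$ is equivalent to the \emph{failure} of coherence of $R_Z$ hinges on the precise behavior of the chain of annihilators $\mathrm{Ann}_R(a)$ (in particular whether one being principal forces the whole family to behave uniformly), which is the delicate content encapsulated in \cite[Theorem II.11]{Cou03}. Establishing the self FP-injectivity of $R_Z$ from scratch, via the criterion $(0:(0:a))=aR_Z$, would be the other technical hurdle were it not already available from that cited theorem; granting these two facts, the corollary is a short assembly of Corollary~\ref{C:semicoh} and \cite[Theorem 2]{Col75}.
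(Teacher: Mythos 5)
Your proposal is correct and takes essentially the same route as the paper, whose entire proof is the citation ``From Corollary~\ref{C:semicoh} and \cite[Theorem II.11]{Cou03} we deduce the following'': you assemble exactly those two ingredients, identifying $Q=R_Z$, invoking \cite[Theorem II.11]{Cou03} for the self FP-injectivity of $R_Z$ and for the flatness/coherence dichotomy, and making the link $(1)\Leftrightarrow(2)$ explicit via \cite[Theorem 2]{Col75}. The quantifier subtlety you flag in $(2)\Leftrightarrow(3)$ is precisely the content the paper also leaves to the cited theorem rather than proving.
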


\medskip

From Corollary~\ref{C:ValSemiCoh} and \cite[Theorem 3]{Cou06} we deduce the following:

\begin{corollary}
\label{C:InjVal} Let $R$ be a valuation ring of maximal ideal $P$. Then  the following conditions are equivalent:
\begin{enumerate}
\item $R$ is either coherent or non-semicoherent;
\item for each multiplicative subset $S$ of $R$ and for each injective  $R$-module $E$, $S^{-1}E$ is injective;
\item for each multiplicative subset $S$ of $R$ and for each FP-injective $R$-module $E$, $S^{-1}E$ is FP-injective;
\item $(\mathrm{E}_R(R/P))_Z$ is FP-injective.
\end{enumerate}  
\end{corollary}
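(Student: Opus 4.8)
The plan is to run the cyclic chain $(2)\Rightarrow(3)\Rightarrow(4)\Rightarrow(2)$ among the three ``module-theoretic'' conditions, and then to isolate the genuinely new content in the equivalence $(1)\Leftrightarrow(4)$, dispatching the latter by a case analysis on $Z$ driven by Corollary~\ref{C:ValSemiCoh}. I would begin with $(2)\Rightarrow(3)$ by a purity argument. If $E$ is FP-injective, then the inclusion $E\hookrightarrow\mathrm{E}(E)$ into its injective hull is pure, since an FP-injective module is a pure submodule of every overmodule. Localization is exact and sends a pure exact sequence of $R$-modules to a pure exact sequence of $S^{-1}R$-modules, so $S^{-1}E$ is a pure submodule of $S^{-1}\mathrm{E}(E)$; by $(2)$ the latter is injective, hence FP-injective, and a pure submodule of an FP-injective module is FP-injective (because $\mathrm{Hom}_R(F,-)$ stays exact on the defining pure sequence for every finitely presented $F$). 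The implication $(3)\Rightarrow(4)$ is then the special case $E=\mathrm{E}_R(R/P)$ and $S=R\setminus Z$, which is multiplicative since $Z$ is prime, giving $S^{-1}R=R_Z$ and $S^{-1}E=(\mathrm{E}_R(R/P))_Z$.

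For $(4)\Rightarrow(2)$ I would invoke \cite[Theorem 3]{Cou06}, whose role is precisely to reduce injectivity of an arbitrary localization $S^{-1}E$ of an arbitrary injective module to FP-injectivity of the single test module $(\mathrm{E}_R(R/P))_Z$; this closes the loop and yields $(2)\Leftrightarrow(3)\Leftrightarrow(4)$. It then remains to prove $(1)\Leftrightarrow(4)$. When $Z=0$ the ring $R$ is a valuation domain, hence coherent, so $(1)$ holds; moreover $R/P$ is torsion (or $R$ is a field), so $(\mathrm{E}_R(R/P))_Z$ is either $0$ or the fraction field, hence FP-injective, and $(4)$ holds as well. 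When $Z\ne0$ I would use Corollary~\ref{C:ValSemiCoh}, which identifies semicoherence of $R$ with $R_Z$ being an IF-ring. Since a localization of a coherent ring is coherent and $R_Z$ is a self FP-injective total quotient ring, coherence of $R$ forces $R_Z$ to be an IF-ring; consequently $(1)$ fails exactly when $R$ is non-coherent while $R_Z$ is an IF-ring. The task thus reduces to showing that $(\mathrm{E}_R(R/P))_Z$ is FP-injective if and only if $R$ is coherent or $R_Z$ is not an IF-ring.

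The main obstacle is exactly this last equivalence, that is, translating FP-injectivity of $(\mathrm{E}_R(R/P))_Z$ into the coherent/non-IF dichotomy. Over the IF-ring $R_Z$ every injective module is flat, so FP-injectivity of $(\mathrm{E}_R(R/P))_Z$ becomes a flatness-type condition on the localized injective hull, which one expects to fail precisely when $R$ is non-coherent, so that $R_Z$ detects the non-finitely-presented annihilators responsible for non-coherence; when $R_Z$ is not an IF-ring this flatness obstruction disappears and FP-injectivity should be automatic. I would extract both directions from the structural description of $(\mathrm{E}_R(R/P))_Z$ furnished by \cite[Theorem 3]{Cou06} together with the characterization of semicoherence in Corollary~\ref{C:ValSemiCoh}, after which the coherence analysis above converts $(4)$ into $(1)$ and completes the cycle.
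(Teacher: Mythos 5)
Your chain $(2)\Rightarrow(3)\Rightarrow(4)$ is correct and is essentially the paper's own purity argument, but the two implications that carry all the real content are not actually proved. The first gap is your step $(4)\Rightarrow(2)$: you attribute to \cite[Theorem 3]{Cou06} the statement that FP-injectivity of the single test module $(\mathrm{E}_R(R/P))_Z$ implies injectivity of every localization $S^{-1}E$ of every injective $E$. That is not what the cited theorem provides; as the paper uses it, it yields injectivity of $S^{-1}E$ under flatness hypotheses, namely when $Z$ is flat or when $E$ itself is flat. The equivalence of $(4)$ with $(2)$ is precisely what this corollary is establishing, so reading it into the citation makes the argument circular. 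This is visible in the paper's own proof: for $(1)\Rightarrow(2)$ it first performs a case analysis (if $R$ is coherent then either $Z=0$, so $Z$ is flat, or $Z=P$, in which case every injective $E$ is flat; if $R$ is non-semicoherent then $Z$ is flat by Corollary~\ref{C:ValSemiCoh}) and only then invokes \cite[Theorem 3]{Cou06}; and for the converse direction it cannot simply cite that theorem but has to redo ``the last part of the proof'' of it. Your proposal, by routing injectivity through the misread citation, never supplies the argument that actually produces condition $(2)$ from condition $(1)$ or $(4)$.

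The second gap is the equivalence $(1)\Leftrightarrow(4)$, which you yourself call ``the main obstacle'': your reduction of it to the statement that $(\mathrm{E}_R(R/P))_Z$ is FP-injective if and only if $R$ is coherent or $R_Z$ is not an IF-ring is a correct reformulation (granting Corollary~\ref{C:ValSemiCoh} and the unproved claim that $R_Z$ is self FP-injective when $Z\ne 0$), but the final paragraph only asserts that FP-injectivity ``should'' fail in the non-coherent IF case and that both directions can be ``extracted'' from a structural description. That extraction is the whole proof. The paper's argument for the hard direction (if $Z$ is not flat and $R$ is not coherent, then $(\mathrm{E}_R(R/P))_Z$ is not FP-injective) needs specific inputs you do not supply: \cite[Theorem II.11]{Cou03} to get $Z\ne 0$, $Z\ne P$ and that $R$ is not self FP-injective, \cite[Proposition 2.4]{Cou82} to conclude that $\mathrm{E}_R(R/P)$ is not flat, and then a rerun of the closing argument of the proof of \cite[Theorem 3]{Cou06}. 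As it stands, your proposal establishes only the easy implications; both hard ones are asserted rather than proved.
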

\begin{proof} $(1)\Rightarrow (2)$. Since $R$ is a valuation ring, $R\setminus S$ is a prime ideal. If $R$ is coherent then either $Z=0$ or $Z=P$. In the first case $Z$ is flat and in the second $E$ is flat. So we conclude by \cite[Theorem 3]{Cou06}. If $R$ is not semicoherent then $Z$ is flat. We conclude in the same way. 

$(2)\Rightarrow (3)$. $E$ is a pure submodule of its injective hull $H$. Then $S^{-1}E$ is a pure submodule of $S^{-1}H$ which is injective by $(2)$. So, $S^{-1}E$ is FP-injective.

$(3)\Rightarrow (4)$ is obvious.

$(4)\Rightarrow (1)$. Suppose that $Z$ is not flat. If $R$ is not coherent, then, by  \cite[Theorem II.11]{Cou03}, $Z\ne 0$ and $Z\ne P$, and $R$ is not self FP-injective.  Let $E=\mathrm{E}_R(R/P)$. By \cite[Proposition 2.4]{Cou82} $E$ is not flat. Now, we do as in the last part of the proof of \cite[Theorem 3]{Cou06} to show that $E_Z$ is not FP-injective. This contradicts $(4)$. The proof is now complete. 
\end{proof}

\begin{theorem} \label{T:gold} For any arithmetical ring $R$ the following conditions are equivalent:
\begin{enumerate}
\item for each maximal ideal $P$, $R_P$ is either coherent or non-semicoherent;
\item for each multiplicative subset $S$ and for each injective  $R$-module $G$ of finite Goldie dimension, $S^{-1}G$ is injective;
\item for each multiplicative subset $S$  and for each FP-injective $R$-module $G$ of finite Goldie dimension, $S^{-1}G$ is FP-injective;
\item for each maximal ideal $P$, $Q(R_P)\otimes_R\mathrm{E}_R(R/P)$ is FP-injective, where $Q(R_P)$ is the ring of fractions of $R_P$.
\end{enumerate}  
\end{theorem}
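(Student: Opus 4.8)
The plan is to prove the cycle $(1)\Rightarrow(2)\Rightarrow(3)\Rightarrow(4)\Rightarrow(1)$, in each step reducing an assertion about $R$ to the valuation ring $R_{\mathfrak p}$ (which is a valuation ring for \emph{every} prime $\mathfrak p$, as $R$ is arithmetical) and invoking Corollary~\ref{C:InjVal}. I would first set up two transfer tools. (i) For a prime $\mathfrak p$ and an $R_{\mathfrak p}$-module $M$, flatness of $R_{\mathfrak p}$ together with the adjunction $\mathrm{Hom}_R(N,M)\cong\mathrm{Hom}_{R_{\mathfrak p}}(N_{\mathfrak p},M)$ yields $\mathrm{Ext}^i_R(N,M)\cong\mathrm{Ext}^i_{R_{\mathfrak p}}(N_{\mathfrak p},M)$ for all $N$; hence $M$ is injective (resp. FP-injective) over $R$ if and only if it is injective (resp. FP-injective) over $R_{\mathfrak p}$. (ii) Since a finitely presented module localizes to a finitely presented module and a module that is locally zero is zero, FP-injectivity is a local property. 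I would also record that condition (1) passes to all localizations: by Corollary~\ref{C:InjVal} it is equivalent to the statement that every localization of every injective $R_P$-module is injective, and, via (i), for $\mathfrak p\subseteq P$ an injective $R_{\mathfrak p}$-module is injective over $R_P$, so its localizations are injective and $R_{\mathfrak p}$ satisfies the same property, hence condition (1) by Corollary~\ref{C:InjVal} again.

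The equivalence $(1)\Leftrightarrow(4)$ and the implications $(2)\Rightarrow(3)\Rightarrow(4)$ are the routine part. Localizing at a maximal ideal $P$, the module $\mathrm{E}_R(R/P)$ is $P$-local and equals $\mathrm{E}_{R_P}(R_P/PR_P)$, while $Q(R_P)=(R_P)_Z$ with $Z$ the zerodivisors of $R_P$; thus $Q(R_P)\otimes_R\mathrm{E}_R(R/P)\cong(\mathrm{E}_{R_P}(R_P/PR_P))_Z$, whose FP-injectivity over $R$ is the same as over $R_P$ by (i). So condition (4) is exactly condition (4) of Corollary~\ref{C:InjVal} read at each $R_P$, and that corollary gives $(4)\Leftrightarrow(1)$, settling $(4)\Rightarrow(1)$. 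For $(2)\Rightarrow(3)$ I would copy the argument of Corollary~\ref{C:InjVal}: an FP-injective $G$ of Goldie dimension $n$ is pure in $\mathrm{E}(G)$, which is injective of the same finite Goldie dimension, so $S^{-1}G$ is pure in the injective module $S^{-1}\mathrm{E}(G)$ and is therefore FP-injective. For $(3)\Rightarrow(4)$ I would take $G=\mathrm{E}_R(R/P)$, which is indecomposable (Goldie dimension $1$) and injective, and localize at the prime $\mathfrak z$ of $R$ lying under $Z$, noting $R_{\mathfrak z}=Q(R_P)$, so that $(3)$ delivers the FP-injectivity of $Q(R_P)\otimes_R\mathrm{E}_R(R/P)$ required by $(4)$.

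The heart of the matter is $(1)\Rightarrow(2)$, and this is where finite Goldie dimension is indispensable. Writing the injective module $G$ of Goldie dimension $n$ as a finite direct sum $G=\bigoplus_{i=1}^n U_i$ of indecomposable injectives, localization commutes with finite direct sums, giving $S^{-1}G=\bigoplus_{i=1}^n S^{-1}U_i$, so it suffices to prove each $S^{-1}U_i$ injective. Each $U_i$ is uniform, so the set $\mathfrak q_i$ of elements acting non-injectively on it is a prime ideal and every $r\notin\mathfrak q_i$ acts bijectively (here indecomposability forces the injective map $r$ to be onto); hence $U_i$ is a module over the valuation ring $R_{\mathfrak q_i}$, over which $S^{-1}U_i$ is just the localization at the image of $S$. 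Since $R_{\mathfrak q_i}$ inherits condition (1), Corollary~\ref{C:InjVal} makes $S^{-1}U_i$ injective over the corresponding localization of $R_{\mathfrak q_i}$, and transfer tool (i) upgrades this to injectivity over $S^{-1}R$. The main obstacle I anticipate is precisely this structural input — that every indecomposable injective is a module over $R_{\mathfrak q}$ for a prime $\mathfrak q$ — together with the bookkeeping of the several nested localizations; the finite Goldie hypothesis is exactly what keeps the argument away from infinite direct sums of injectives, where commutation with localization and preservation of injectivity both break down.
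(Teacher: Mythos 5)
Your proposal is correct and follows essentially the same route as the paper: decompose $G$ into finitely many indecomposable injectives, realize each summand as a module over a valuation-ring localization of $R$ (via the local endomorphism ring, equivalently your prime of non-bijective actors), and invoke Corollary~\ref{C:InjVal}, with the purity argument for $(2)\Rightarrow(3)$ and Corollary~\ref{C:InjVal} again for $(4)\Leftrightarrow(1)$. The only difference is that the paper views the indecomposable summand as a module over $R_P$ for a \emph{maximal} ideal $P$ containing your prime $\mathfrak{q}_i$, so hypothesis (1) applies directly and your extra lemma that condition (1) descends to localizations at arbitrary primes becomes unnecessary.
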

\begin{proof} $(1)\Rightarrow (2)$. $G$ is a finite direct sum of indecomposable injective modules. We may assume that $G$ is indecomposable.  Since $\mathrm{End}_RG$ is local, there exists a maximal ideal $P$ such that  $G$ is a module over $R_P$. If $S'$ is the multiplicative subset of $R_P$ generated by $S$, then $S^{-1}G=S'^{-1}G$. We conclude that $S^{-1}G$ is injective by Corollary~\ref{C:InjVal}. 

We show  $(2)\Rightarrow (3)$ as in the proof of Corollary~\ref{C:InjVal}, and $(3)\Rightarrow (4)$ is obvious. 

$(4)\Rightarrow (1)$ is an immediate consequence of Corollary~\ref{C:InjVal}.
\end{proof}

\begin{remark}
\textnormal{If $R$ is an arithmetical ring which is coherent or reduced, then $R$ satisfies the conditions of Theorem~\ref{T:gold}.}
\end{remark}

\begin{corollary} \label{C:gold}
Let $R$ be an arithmetical ring satisfying the following two conditions:
\begin{itemize}
\item[(a)] for each maximal ideal $P$, $R_P$ is either coherent or non-semicoherent;
\item[(b)] every finitely generated $R$-module has a finite Goldie dimension.
\end{itemize}
Then, for each multiplicative subset $S$ and for each finitely injective (respectively FP-injective) $R$-module $G$ , $S^{-1}G$ is finitely injective (respectively FP-injective).

Moreover, if $R_P$ is coherent for each maximal ideal $P$ then $R$ is coherent too.
\end{corollary}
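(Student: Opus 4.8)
The engine for both assertions is Theorem~\ref{T:gold}: under hypothesis (a), for every injective module $N$ of finite Goldie dimension and every multiplicative set $S$, the localization $S^{-1}N$ is again injective. Hypothesis (b) feeds this engine, because it forces the injective hull $\mathrm{E}(A)$ of every finitely generated submodule $A$ to have finite Goldie dimension. The plan is to deduce the finitely injective case directly from this, then obtain the FP-injective case as a formal consequence, and finally to treat coherence separately.

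First I would reformulate finite injectivity in the form implicit in \cite{RaRa73}: a module $M$ is finitely injective if and only if every finitely generated submodule $A$ of $M$ is contained in an injective submodule of $M$. Indeed, given such $A$, finite injectivity lets one extend the inclusion $A\hookrightarrow M$ along $A\hookrightarrow\mathrm{E}(A)$; the resulting map $\mathrm{E}(A)\to M$ is injective because $A$ is essential in $\mathrm{E}(A)$, so its image is an injective submodule $N_A\cong\mathrm{E}(A)$ of $M$ containing $A$, and the converse is immediate. By (b) each $N_A$ has finite Goldie dimension. Now let $M$ be finitely injective and let $C$ be a finitely generated submodule of $S^{-1}M$; clearing the denominators of a finite generating set yields a finitely generated submodule $A\subseteq M$ with $C\subseteq S^{-1}A\subseteq S^{-1}N_A\subseteq S^{-1}M$. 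Since $N_A$ is injective of finite Goldie dimension, Theorem~\ref{T:gold} gives that $S^{-1}N_A$ is injective, so $C$ lies in an injective submodule of $S^{-1}M$, whence $S^{-1}M$ is finitely injective. The same argument applies over $S^{-1}R$, using that an $S^{-1}R$-module injective over $R$ is injective over $S^{-1}R$.

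The FP-injective case I would reduce to the finitely injective one. Put $E=\mathrm{E}(M)$; it is injective, hence finitely injective, and $M$ is a pure submodule of $E$ because $M$ is FP-injective. Localization is exact and preserves purity, so $S^{-1}M$ is a pure submodule of $S^{-1}E$. By the previous paragraph $S^{-1}E$ is finitely injective, hence FP-injective, hence a pure submodule of its injective hull $\mathrm{E}(S^{-1}E)$. By transitivity of purity $S^{-1}M$ is a pure submodule of the injective module $\mathrm{E}(S^{-1}E)$, and therefore $S^{-1}M$ is FP-injective.

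For the final assertion I would show directly that every finitely generated ideal $I=(a_1,\dots,a_n)$ is finitely presented, i.e.\ that the relation module $K=\ker(R^n\to I)$ is finitely generated. Since $R$ is arithmetical each $I_P$ is principal, and since $R_P$ is coherent the module $K_P=\ker(R_P^n\to I_P)$ is finitely generated over $R_P$ for every maximal ideal $P$. The entire difficulty lies in passing from this local finite generation to finite generation of $K$ over $R$, and this is exactly where hypothesis (b) must be used: $K$ is a submodule of the finitely generated module $R^n$ and hence has finite Goldie dimension, and I would exploit this finite Goldie dimension to bound the local data uniformly and to patch finitely many local generators into a global finite generating set of $K$. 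This globalization is the main obstacle, the finitely injective and FP-injective statements being formal consequences of Theorem~\ref{T:gold} once the characterization of finite injectivity is in hand.
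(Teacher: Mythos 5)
For the first two assertions your argument is correct and is essentially the paper's own proof. Your characterization of finite injectivity (every finitely generated submodule is contained in an injective submodule, obtained as an isomorphic copy of its injective hull) is exactly \cite[Proposition 3.3]{RaRa73}, which the paper cites rather than reproves; the denominator-clearing step, the appeal to hypothesis (b) to get finite Goldie dimension of that hull, and the application of Theorem~\ref{T:gold} to conclude that its localization is injective all match the paper. Likewise, your treatment of the FP-injective case --- purity of $G$ in $\mathrm{E}(G)$, exactness of localization preserving purity, and purity inside an injective module implying FP-injectivity --- is the paper's argument with one extra (harmless) invocation of transitivity of purity.

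The coherence assertion, however, is where your proposal has a genuine gap, and you acknowledge it yourself. You correctly reduce the problem to showing that $K=\ker(R^n\to I)$ is finitely generated, and correctly observe that each $K_P$ is finitely generated because $I_P$ is finitely presented over the coherent ring $R_P$. But finite generation of $K_P$ at every maximal ideal $P$ does \emph{not} globalize in general: over an almost Dedekind domain that is not Dedekind there are locally principal ideals that are not finitely generated, so some additional input beyond local data is indispensable. Your proposed use of hypothesis (b) --- that $K$, being a submodule of $R^n$, has finite Goldie dimension, which should ``bound the local data uniformly and patch finitely many local generators'' --- is not an argument: finite Goldie dimension does not bound numbers of generators, and no patching mechanism is described. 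The paper does not attempt this globalization either; it disposes of the last assertion by citing \cite[Th\'eor\`eme 1.4]{Cou82}, which is precisely the nontrivial local-to-global coherence result you would need. So to complete your proof you must either invoke that theorem or supply its argument; as written, the final assertion remains unproven.
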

\begin{proof} Let $M$ be a finitely generated $S^{-1}R$-submodule of $S^{-1}G$. There exists a finitely generated submodule $M'$ of $G$ such that $M=S^{-1}M'$. If $G$ is finitely injective, by \cite[Proposition 3.3]{RaRa73} it contains an injective hull $E$ of $M'$. Then $E$ has finite Goldie dimension. By Theorem~\ref{T:gold} $S^{-1}E$ is injective. It contains $M$ and it is contained in $S^{-1}G$. By using again \cite[Proposition 3.3]{RaRa73} we conclude that $S^{-1}G$ is finitely injective. 

If $G$ is FP-injective, it is a pure submodule of its injective hull $H$. Then $S^{-1}G$ is a pure submodule of $S^{-1}H$ which is finitely injective. So, $S^{-1}G$ is FP-injective. 

The last assertion is an immediate consequence of \cite[Th\'eor\`eme 1.4]{Cou82}.
\end{proof}

\begin{remark}
\textnormal{If $R$ is an arithmetical ring satisfying the condition (b) of Corollary~\ref{C:gold} then $\mathrm{Min}\ R/A$ is finite for each ideal $A$: we may assume that $A=0$ and $R$ is reduced; its total ring of quotient is Von Neumann regular by \cite[Proposition 2 p. 106]{Lam66} and semisimple by \cite[Proposition 2 p. 103]{Lam66}; it follows that $\mathrm{Min}\ R$ is finite. However, the converse doesn't hold. For instance, let $R =\{{\binom{d\ \ e}{0\ \ d}}\mid d\in \mathbb{Z}, e\in\mathbb{Q}/\mathbb{Z}\}$ be the trivial extension of $\mathbb{Z}$ by $\mathbb{Q}/\mathbb{Z}$.  Then $N=\{{\binom{0\ \ e}{0\ \ 0}}\mid e\in\mathbb{Q}/\mathbb{Z}\}$ is the only minimal prime. For each prime integer $p$, the localization $R_{(p)}$ is the trivial extension of $\mathbb{Z}_{(p)}$ by $\mathbb{Q}/\mathbb{Z}_{(p)}$. So it is a valuation ring. Consequently $R$ is arithmetical. But $N\cong\mathbb{Q}/\mathbb{Z}$ is an infinite direct sum, whence $R$ is not a module of finite Goldie dimension.}
\end{remark}
\medskip

By \cite[Corollary 8]{Cou06}, if $R$ is a h-local Pr\"ufer domain, all localizations of injective $R$-modules are injective. Now, we extend this result to each Pr\"ufer domain of finite character. A such ring satisfies condition(b) of Corollary~\ref{C:gold}. But $\mathbb{Z}+X\mathbb{Q}[[X]]$ is an example showing that the converse doesn't hold.
\begin{lemma}
\label{L:prodinj} Let $R$ be a Pr\"ufer domain of finite character. For each maximal ideal $P$, let $F_{(P)}$ be an  injective $R_P$-module and let $F=\prod_{P\in\mathrm{Max}\ R}F_{(P)}$. Then $S^{-1}F$ is injective for every multiplicative subset $S$ of $R$.
\end{lemma}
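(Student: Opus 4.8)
The plan is to verify injectivity of $S^{-1}F$ straight from Baer's criterion over $R$, after a reduction that exploits finite character to replace the infinite product $\prod_P F_{(P)}$ by a finite one. First I would record the local input and some easy reductions. Since $R$ is Pr\"ufer, each $R_P$ is a valuation domain, and a valuation domain is coherent (its finitely generated ideals are principal, hence free, hence finitely presented). So condition (1) of Corollary~\ref{C:InjVal} holds for every $R_P$, and therefore every localization of an injective $R_P$-module is again injective. Because restriction of scalars along any localization $R\to T$ is right adjoint to the exact functor $T\otimes_R-$, it preserves injectivity; consequently each $F_{(P)}$ is injective as an $R$-module, each $S^{-1}F_{(P)}$ is injective as an $R$-module, and (products of injectives being injective) $F$ itself is an injective $R$-module. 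In particular $S^{-1}F$ is FP-injective by Corollary~\ref{C:gold}, a Pr\"ufer domain of finite character satisfying conditions (a) and (b) there. By Baer's criterion it then suffices to extend to $R$ every homomorphism $g\colon I\to S^{-1}F$ out of a nonzero ideal $I$ of $R$ (the case $I=0$ being trivial).

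The decisive use of finite character is the following reduction. A nonzero $I$ is contained in only finitely many maximal ideals $P_1,\dots,P_r$, since any nonzero element of $I$ lies in finitely many of them. Moreover I would show that some finitely generated subideal $I_0\subseteq I$ already has the same maximal support: as $J$ runs over the nonzero finitely generated subideals of $I$, the sets $V(J)\cap\mathrm{Max}\,R$ are finite (finite character) and downward directed (because $V(J_1+J_2)=V(J_1)\cap V(J_2)$), so a subideal of least such cardinality yields $V(I_0)\cap\mathrm{Max}\,R=\{P_1,\dots,P_r\}$. Accordingly I split the index set and write $F=F'\times F''$ with $F'=\prod_{i=1}^{r}F_{(P_i)}$ and $F''=\prod_{Q\notin\{P_1,\dots,P_r\}}F_{(Q)}$, so that $S^{-1}F=S^{-1}F'\times S^{-1}F''$ and $g=(g',g'')$.

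The two pieces are then handled separately. For the finite factor, $S^{-1}F'=\prod_{i=1}^{r}S^{-1}F_{(P_i)}$ is a finite product of injective $R$-modules, hence injective, so $g'$ extends to $R$ outright. For the complementary factor I would extend the restriction $g''|_{I_0}$ to a map $h''\colon R\to S^{-1}F''$ (possible since $I_0$ is finitely generated and $S^{-1}F''$ is FP-injective by Corollary~\ref{C:gold}, $F''$ being injective) and then argue that $h''$ in fact extends all of $g''$. Setting $d''(a)=g''(a)-a\,h''(1)$, one has $d''|_{I_0}=0$, and for every $a\in I$ and $b\in I_0$ the $R$-linearity gives $b\,d''(a)=d''(ba)=0$ because $ba\in I_0$; hence $d''(a)$ is annihilated by $I_0$. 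But the $I_0$-annihilator of $S^{-1}F''$ vanishes: it equals $S^{-1}\bigl(F''[I_0]\bigr)$, and for each $Q\notin\{P_1,\dots,P_r\}$ we have $I_0\not\subseteq Q$, so some generator of $I_0$ is a unit in the valuation ring $R_Q$ and kills no nonzero element of $F_{(Q)}$, whence $F_{(Q)}[I_0]=0$ and $F''[I_0]=\prod_{Q\notin\{P_1,\dots,P_r\}}F_{(Q)}[I_0]=0$. Thus $d''\equiv 0$, $g''$ extends, and combining the two extensions gives the desired extension of $g$.

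The one genuine obstacle is that localization does not commute with the infinite product $\prod_P F_{(P)}$, so one cannot test injectivity factor by factor: the natural map $S^{-1}F\to\prod_P S^{-1}F_{(P)}$ is in general neither injective nor surjective (for $R=\mathbb Z$ and $S=\mathbb Z\setminus\{0\}$ it is the zero map while $S^{-1}F\neq 0$). Finite character is exactly what removes this obstacle, by furnishing the finitely generated subideal $I_0$ with full maximal support: this both confines the torsion computation $F''[I_0]=0$ to finitely many primes and isolates the genuinely injective finite factor $S^{-1}F'$. I expect the step requiring the most care to be the verification that $V(I)\cap\mathrm{Max}\,R$ is realized by a single finitely generated subideal $I_0$.
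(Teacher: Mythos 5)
Your proof is correct, but it takes a genuinely different route from the paper's. The paper argues structurally: each $F_{(P)}$ is split into its torsion submodule $T_{(P)}$ and torsion-free quotient, so that $F\cong T\oplus G$ with $T=\prod_P T_{(P)}$ and $G$ torsion-free divisible (whence $S^{-1}G$ injective for free); finite character is used to identify the torsion submodule of the product $T$ (and likewise of $\prod_P S^{-1}T_{(P)}$) with the direct sum $T'=\oplus_P T_{(P)}$; then injectivity of $S^{-1}T$ follows from \cite[Theorem 3]{Cou06} applied to each $S^{-1}T_{(P)}$, the fact that torsion submodules of injective modules over Pr\"ufer domains are injective, and the resulting splitting of $0\rightarrow S^{-1}T'\rightarrow S^{-1}T\rightarrow S^{-1}(T/T')\rightarrow 0$. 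You instead verify Baer's criterion directly, using finite character to produce a finitely generated subideal $I_0\subseteq I$ with the same maximal support (your minimality argument on the finite, downward directed family of sets $V(J)\cap\mathrm{Max}\,R$ is sound), splitting the product at the finitely many primes containing $I$, and killing the residual factor by the annihilator computation $(S^{-1}F'')[I_0]=S^{-1}\bigl(F''[I_0]\bigr)=0$ (valid since $I_0$ is finitely generated and $I_0\not\subseteq Q$ for the residual primes $Q$). Your approach buys elementarity: beyond the local input from Corollary~\ref{C:InjVal} (or \cite[Theorem 3]{Cou06}) it needs no structure theory of injective modules over Pr\"ufer domains, only Baer plus annihilators. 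Its one cost is the appeal to Corollary~\ref{C:gold} to get FP-injectivity of $S^{-1}F''$: that corollary requires condition (b) (finitely generated modules have finite Goldie dimension), which the paper asserts for Pr\"ufer domains of finite character but never proves, and which the paper's own proof of this lemma does not use at all; so your argument carries an extra dependency. You could eliminate it by noting instead that $R$ is coherent (finitely generated ideals of a Pr\"ufer domain are invertible, hence finitely presented) and that over a coherent ring localization preserves FP-injectivity, by flat base change for $\mathrm{Ext}^1$ against finitely presented modules; this standard fact is not in the paper but would make your proof self-contained and independent of condition (b). The remaining steps check out: each $S^{-1}F_{(P_i)}$ is injective over $R$ via Corollary~\ref{C:InjVal} and restriction of scalars along the flat map $R\rightarrow R_{P_i}$, the finite product $S^{-1}F'$ is then injective, FP-injectivity of $S^{-1}F''$ over $S^{-1}R$ and over $R$ agree (flat base change again), and $d''=0$ follows as you say, so $g$ extends and Baer's criterion applies.
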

\begin{proof} Let $T_{(P)}$ be the torsion submodule of $F_{(P)}$, let $G_{(P)}=F_{(P)}/T_{(P)}$, let $T=\prod_{P\in\mathrm{Max}\ R}T_{(P)}$ and let $G=\prod_{P\in\mathrm{Max}\ R}G_{(P)}$. Then $G$ is torsion-free and $F\cong T\oplus G$. It is obvious that $S^{-1}G$ is injective. Let $T'=\oplus_{P\in\mathrm{Max}\ R}T_{(P)}$. Since $R$ has finite character, it is easy to check that $T'$ is the torsion submodule of $T$. So, $T'$ is injective and $S^{-1}(T/T')$ is injective. For each maximal ideal $P$, $S^{-1}T_{(P)}$ is injective by \cite[Theorem 3]{Cou06}. Since $S^{-1}T'$ is the torsion submodule of $\prod_{P\in\mathrm{Max}\ R}S^{-1}T_{(P)}$, we successively deduce the injectivity of $S^{-1}T'$ and $S^{-1}T$. 
\end{proof}

\begin{theorem} \label{T:finchar} Let $R$ be a Pr\"ufer domain of finite character. Then, for each  injective module $G$, $S^{-1}G$ is injective for every multiplicative subset $S$ of $R$.
\end{theorem}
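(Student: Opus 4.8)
The plan is to exhibit an arbitrary injective module $G$ as a direct summand of a product of the type handled by Lemma~\ref{L:prodinj}, and then let injectivity pass to the summand after localization. For each maximal ideal $P$ write $S_P=R\setminus P$ and set $F_{(P)}=\mathrm{E}_{R_P}(S_P^{-1}G)$, the injective hull, taken over the valuation domain $R_P$, of the localization $S_P^{-1}G$. By construction each $F_{(P)}$ is an injective $R_P$-module, so the product $F=\prod_{P\in\mathrm{Max}\ R}F_{(P)}$ is precisely of the form to which Lemma~\ref{L:prodinj} applies; hence $S^{-1}F$ is injective for every multiplicative subset $S$ of $R$.

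Next I would check that the canonical $R$-linear map $\varphi\colon G\to F$, obtained by composing the localization maps $G\to S_P^{-1}G$ with the inclusions $S_P^{-1}G\hookrightarrow F_{(P)}$, is injective. This is the standard fact that a module embeds into the product of its localizations at the maximal ideals: if $g$ lies in the kernel, then for every maximal ideal $P$ there is some $s\in R\setminus P$ with $sg=0$, so $\mathrm{Ann}(g)$ is contained in no maximal ideal and therefore equals $R$, forcing $g=0$. This step is purely formal and requires nothing about finite character — that hypothesis has already been consumed in Lemma~\ref{L:prodinj}.

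Since $G$ is injective, the monomorphism $\varphi$ splits, so $G$ is isomorphic to a direct summand of $F$; write $F\cong G\oplus G'$. Applying the exact functor $S^{-1}(-)$ gives $S^{-1}F\cong S^{-1}G\oplus S^{-1}G'$, exhibiting $S^{-1}G$ as a direct summand of the injective module $S^{-1}F$. As a direct summand of an injective module is injective, $S^{-1}G$ is injective, which is the assertion.

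Granting Lemma~\ref{L:prodinj}, the argument is therefore essentially formal, and the genuine difficulty — controlling products of local injectives under finite character — has been isolated in that lemma. The only point I would record carefully is that injectivity of an $S^{-1}R$-module over $R$ and over $S^{-1}R$ coincide (the forward implication because $S^{-1}R\otimes_R(-)$ is exact, the converse via Baer's criterion applied to the ideals $S^{-1}I$ of $S^{-1}R$), so that the direct-summand decomposition of $S^{-1}F$ legitimately transfers injectivity to $S^{-1}G$. The one conceptual step is the recognition that an injective module, being a summand of every extension, is in particular a summand of the product of the injective hulls of its localizations, which is exactly the class of modules governed by the lemma.
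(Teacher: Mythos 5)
Your proof is correct, and it reduces to the same key lemma as the paper (Lemma~\ref{L:prodinj}); what genuinely differs is how you realize $G$ as a direct summand of a product of injective $R_P$-modules. The paper argues by duality: it takes the injective cogenerator $E=\prod_{P\in\mathrm{Max}\ R}\mathrm{E}_R(R/P)$, sets $F=\mathrm{Hom}_R(\mathrm{Hom}_R(G,E),E)\cong\prod_{P\in\mathrm{Max}\ R}\mathrm{Hom}_R(\mathrm{Hom}_R(G,E),\mathrm{E}_R(R/P))$, and splits the evaluation map $G\to F$; but to know each factor is an injective $R_P$-module it must use that the Pr\"ufer domain $R$ is coherent, invoke \cite[Theorem~XIII.6.4(b)]{FuSa01} to get flatness of $\mathrm{Hom}_R(G,E)$, and then use that $\mathrm{Hom}$ from a flat module into an injective module is injective. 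Your construction, $F_{(P)}=\mathrm{E}_{R_P}(S_P^{-1}G)$ together with the canonical embedding $G\hookrightarrow\prod_P F_{(P)}$, split because $G$ is injective, avoids the cogenerator, the coherence of $R$, and the flatness theorem altogether: every step outside the lemma is valid over an arbitrary commutative ring, so your version isolates the Pr\"ufer and finite-character hypotheses entirely inside Lemma~\ref{L:prodinj} and is the more elementary of the two. What the paper's double dual buys is canonicity ($F$ is functorial in $G$, whereas injective hulls involve choices) and consistency with the Hom-flatness toolkit developed earlier in the paper; for this particular theorem nothing is lost by your route. One small caveat on your closing remark: the equivalence of injectivity over $R$ and over $S^{-1}R$ for an $S^{-1}R$-module is true, but your parenthetical justifications are not quite proofs (the direction ``injective over $R$ implies injective over $S^{-1}R$'' follows, e.g., because any $R$-linear map between $S^{-1}R$-modules is automatically $S^{-1}R$-linear, not from exactness of $S^{-1}R\otimes_R-$); in any case the point is dispensable, since the conclusion of the lemma can be read as injectivity over $R$, and a direct summand of an injective $R$-module is injective over $R$.
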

\begin{proof} Let $E=\prod_{P\in\mathrm{Max}\ R}\mathrm{E}_R(R/P)$ and let $F=\mathrm{Hom}_R(\mathrm{Hom}_R(G,E),E)$. Then $E$ is an injective cogenerator and $G$ is isomorphic to a summand of $F$. Since $R$ is coherent, $\mathrm{Hom}_R(G,E)$ is flat by \cite[Theorem~XIII.6.4(b)]{FuSa01}. Thus $F$ is injective. We put $F_{(P)}=\mathrm{Hom}_R(\mathrm{Hom}_R(G,E),\mathrm{E}_R(R/P))$. Then $F_{(P)}$ is an injective $R_P$-module and $F\cong\prod_{P\in\mathrm{Max}\ R}F_{(P)}$. By Lemma~\ref{L:prodinj} $S^{-1}F$ is injective. We conclude that $S^{-1}G$ is injective too. 
\end{proof}

\begin{corollary} \label{C:semiInj} 
Let $R$ be a semilocal Pr\"ufer domain. Then, for each  injective module $G$, $S^{-1}G$ is injective for every multiplicative subset $S$ of $R$.
\end{corollary}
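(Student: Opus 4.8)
The plan is to recognize Corollary~\ref{C:semiInj} as an immediate specialization of Theorem~\ref{T:finchar}. The only work is to check that the hypothesis of that theorem is met, namely that a semilocal Pr\"ufer domain is automatically of finite character; once this is in hand, the conclusion is nothing but a restatement of the theorem.

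First I would unwind the definitions. A ring is semilocal precisely when it possesses only finitely many maximal ideals. Therefore any element of $R$ whatsoever — and in particular any nonzero element — can lie in at most this finite number of maximal ideals. This is exactly the defining property of being \emph{of finite character}, as recalled in the introduction: every nonzero element is contained in only finitely many maximal ideals. Thus a semilocal Pr\"ufer domain is, trivially, a Pr\"ufer domain of finite character.

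With that observation made, I would simply invoke Theorem~\ref{T:finchar}: for such $R$, any localization $S^{-1}G$ of an injective module $G$ is injective, for every multiplicative subset $S$. This is precisely the assertion of the corollary.

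I do not anticipate any genuine obstacle. The entire content of the proof is the one-line reduction that the semilocal condition forces finite character, after which the preceding theorem applies verbatim. If anything merits a word of care, it is only to note that the finite character condition concerns nonzero elements, but since there are finitely many maximal ideals in total the condition holds a fortiori for all elements, so no subtlety arises.
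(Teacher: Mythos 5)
Your proposal is correct and matches the paper's intent exactly: the corollary is stated without proof precisely because a semilocal domain, having only finitely many maximal ideals, is trivially of finite character, so Theorem~\ref{T:finchar} applies verbatim. Your observation that the finite character condition holds a fortiori is the entire content of the reduction.
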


The following example shows that the finite character is not a necessary condition in order that localizations of injective modules at multiplicative subsets are still injective.

\begin{example}
\textnormal{Let $R$ be the ring defined in \cite[Example 39]{Hut81}. Then $R$ is a Pr\"ufer domain which is not of finite character. But, since $R$ is the union of a countable family of principal ideal subrings, it is easy to check that, for any multiplicative subset $S$, $R$ satisfies \cite[Condition 14]{Dad81}. So, for each injective module $G$, $S^{-1}G$ is injective by \cite[Theorem 15]{Dad81}.}

\textnormal{Here another example communicated to me by L. Salce. Take $R$ constructed as in Chapter III, Example 5.5 of \cite{FuSa01}, which is a classical example by Heinzer-Ohm of almost Dedekind domain not of finite character. If you start with a countable field $K$, then $R$ is countable, hence conditions (14a) and (14c) of \cite{Dad81} are satisfied. Condition (14b) must be checked only for $I$ principal ideal, and it is easy to see that it holds true.} 
\end{example}

\medskip
Consequently, the following question is unsolved:

\textbf{Open question:}  {\it characterize the Pr\"ufer domains such that localizations of injective at multiplicative subsets are still injective.}


\begin{thebibliography}{}

\bibitem[Colby, 1975]{Col75}
Colby, R. (1975).
\newblock Flat injective modules.
\newblock {\em J. Algebra}, 35:239--252.

\bibitem[Couchot, 1982]{Cou82}
Couchot, F. (1982).
\newblock Exemples d'anneaux auto fp-injectifs.
\newblock {\em Comm. Algebra}, 10(4):339--360.

\bibitem[Couchot, 2003]{Cou03}
Couchot, F. (2003).
\newblock The $\lambda$-dimension of commutative arithmetic rings.
\newblock {\em Comm. Algebra}, 31(7):3143--3158.

\bibitem[Couchot, 2006]{Cou06}
Couchot, F. (2006).
\newblock Localization of injective modules over valuations rings.
\newblock {\em Proc. Amer. Math. Soc.}, 134(4):1013--1017.

\bibitem[Dade, 1981]{Dad81}
Dade, E. (1981).
\newblock Localization of injective modules.
\newblock {\em J. Algebra}, 69:416--425.

\bibitem[Fuchs and Salce, 2001]{FuSa01}
Fuchs, L. and Salce, L. (2001).
\newblock {\em Modules over Non-Noetherian Domains}.
\newblock Number~84 in Mathematical Surveys and Monographs. American
  Mathematical Society, Providence.

\bibitem[Hutchins, 1981]{Hut81}
Hutchins, H. (1981).
\newblock {\em Examples of commutative rings}.
\newblock Polygonal Publishing House.

\bibitem[Lambek, 1966]{Lam66}
Lambek, J. (1966).
\newblock {\em Lectures on rings and modules}.
\newblock Blaisdell. Waltham.

\bibitem[Matlis, 1985]{Mat85}
Matlis, E. (1985).
\newblock Commutative semicoherent and semiregular rings.
\newblock {\em J. Algebra}, 95(2):343--372.

\bibitem[Ramamurthi and Rangaswamy, 1973]{RaRa73}
Ramamurthi, V. and Rangaswamy, K. (1973).
\newblock On finitely injective modules.
\newblock {\em J. Aust. Math. Soc.}, XVI(2):239--248.

\end{thebibliography}
\end{document}